\newtheorem{theorem}{Theorem}[section]
\newtheorem{proposition}[theorem]{Proposition}
\newtheorem{lemma}[theorem]{Lemma}
\newtheorem{definition}[theorem]{Definition}
\newtheorem{example}[theorem]{Example}
\newtheorem{question}[theorem]{Question}
\numberwithin{equation}{section}
\begin{document}
\title{ $P$-spaces and the Volterra property}

\author{Santi Spadaro}
\address{Department of Mathematics, Faculty of Science and Engineering, York University, Toronto, ON,  M3J 1P3 Canada}
\email{santispadaro@yahoo.com}

\thanks{Supported by an INdAM Cofund outgoing fellowship}

\begin{abstract}
We study the relationship between generalizations of $P$-spaces and Volterra (weakly Volterra) spaces, that is, spaces where every two dense $G_\delta$ have dense (non-empty) intersection. In particular, we prove that every dense and every open, but not every closed subspace of an almost $P$-space is Volterra and that there are Tychonoff non-weakly Volterra weak $P$-spaces. These results should be compared with the fact that every $P$-space is hereditarily Volterra. As a byproduct we obtain an example of a hereditarily Volterra space and a hereditarily Baire space whose product is not weakly Volterra. We also show an example of a Hausdorff space which contains a non-weakly Volterra subspace and is both a weak $P$-space and an almost $P$-space. \end{abstract}

\subjclass[2000]{primary 54E52, 54G10; secondary 28A05}
\keywords{Baire, Volterra, $P$-space, almost $P$-space, weak $P$-space, density topology}

\maketitle

\section{Introduction}


A real-valued function $f$ is called \emph{pointwise discontinuous} if the set of all points where it is continuous is dense. In 1881, eighteen years before Ren\'e-Louis Baire published the Baire category theorem \cite{B}, a twenty years old student of the \emph{Scuola Normale Superiore di Pisa} named Vito Volterra proved that there are no two pointwise discontinous real-valued functions on $\mathbb{R}$ such that the set of all points of continuity of one is equal to the set of all discontinuity points of the other \cite{V} (see also \cite{D}). Volterra's theorem has inspired an interesting generalization of the Baire property.

Given $f: X \to \mathbb{R}$, let $C(f)$ be the set of all continuity points of $f$. 

\begin{definition}
\cite{GP} A topological space $X$ is called \emph{Volterra} (respectively, \emph{weakly Volterra}) if for every pair of pointwise discontinuous functions $f: X \to \mathbb{R}$ and $g: X \to \mathbb{R}$ the set $C(f) \cap C(g)$ is dense in $X$ (respectively, non-empty).
\end{definition}

Thus Volterra's theorem can be rephrased by stating that the real line is a Volterra space.  Gauld and Piotrowski proved the following internal characterization of Volterra and weakly Volterra spaces. Recall that a set is called a \emph{$G_\delta$ set} if it can be represented as a countable intersection of open sets.

\begin{proposition}
\cite{GP} A space is Volterra (respectively, weakly Volterra) if and only if for every pair $G$ and $H$ of dense $G_\delta$ subsets of $X$, the set $G \cap H$ is dense (respectively, non-empty).
\end{proposition}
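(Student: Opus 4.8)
The plan is to prove the two directions separately, noting that one is essentially formal while the other rests on a construction. First I would recall the classical fact, valid in any topological space, that the continuity set $C(f)$ of an arbitrary function $f : X \to \mathbb{R}$ is always a $G_\delta$. I would establish this through the oscillation function $\omega_f(x) = \inf\{\operatorname{diam} f(V) : V \text{ open}, x \in V\}$, checking that $\{x : \omega_f(x) < \varepsilon\}$ is open for each $\varepsilon > 0$ and that $C(f) = \{x : \omega_f(x) = 0\} = \bigcap_{n} \{x : \omega_f(x) < 1/n\}$.

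The ``if'' direction is then immediate: if $f$ and $g$ are pointwise discontinuous, then $C(f)$ and $C(g)$ are by definition dense, and by the previous paragraph they are dense $G_\delta$ sets; the hypothesis gives that $C(f) \cap C(g)$ is dense (resp. non-empty), which is exactly the Volterra (resp. weakly Volterra) property.

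The substance of the proposition lies in the converse, for which I would prove the key lemma that every dense $G_\delta$ set $G \subseteq X$ equals $C(f)$ for some (necessarily pointwise discontinuous) $f$. Write $G = \bigcap_n V_n$ with the $V_n$ open and decreasing, and define $f(x) = 0$ for $x \in G$ and $f(x) = 1/m(x)$ otherwise, where $m(x) = \min\{n : x \notin V_n\}$. Continuity at each $x \in G$ follows by using $V_N$ as a neighborhood on which $f$ is bounded by $1/(N+1)$; discontinuity at each $x \notin G$ follows because $f(x) > 0$ while density of $G$ forces every neighborhood of $x$ to contain a point where $f$ vanishes. Granting this lemma, given dense $G_\delta$ sets $G, H$ I would take $f, g$ with $C(f) = G$ and $C(g) = H$; the Volterra (resp. weakly Volterra) hypothesis then makes $C(f) \cap C(g) = G \cap H$ dense (resp. non-empty).

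I expect the only real obstacle to be the discontinuity half of the lemma, that is, verifying that $f$ is discontinuous at every point outside $G$. This is precisely where density of $G$ is indispensable: without it the function $f$ above could be locally constant near some point of $X \setminus G$ and hence continuous there, so the construction genuinely uses the standing assumption that $G$ is dense and not merely $G_\delta$.
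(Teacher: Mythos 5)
Your proof is correct, and since the paper states this proposition as a citation to Gauld and Piotrowski without reproducing an argument, there is no in-paper proof to diverge from; your route (the oscillation function to show $C(f)$ is always $G_\delta$, plus the construction $f(x)=1/m(x)$ realizing any dense $G_\delta$ as the exact continuity set of a pointwise discontinuous function) is precisely the standard argument behind the cited result. The only cosmetic point is to index the $V_n$ so that $m(x)=\min\{n : x \notin V_n\}$ is never $0$ (or use $1/(m(x)+1)$), avoiding division by zero at points outside $V_0$.
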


Recall that a space is Baire if every countable intersection of dense open sets is dense. From the above characterization it's clear that every Baire space is Volterra. The problem of when a Volterra space is Baire has been studied extensively (see \cite{CJ} and \cite{GL}).

This note was inspired by the simple observation that every $P$-space (that is, a space where every $G_\delta$ set is open) is hereditarily Volterra. Weak $P$-spaces and almost $P$-spaces are the two most popular weakenings of $P$-spaces. We compare these properties with the notions of Volterra and weakly Volterra space. We find that every dense subset and every open subset of an almost $P$-space is Volterra, while weak $P$-spaces may fail to be weakly Volterra. Our example of a non-weakly Volterra weak $P$-space shows that the product of a hereditarily Baire space and a hereditarily Volterra space may fail to be weakly Volterra. Finally, we introduce the class of \emph{pseudo $P$-spaces}, a natural new weakening of $P$-spaces and construct a Hausdorff Baire pseudo $P$-space with a non-weakly Volterra subspace. The existence of a Tychonoff space with the same properties is left as an open question.

\section{$P$-spaces and generalizations}

\begin{definition} {\ \\}
\begin{enumerate}
\item A space $X$ is called a \emph{$P$-space} if every countable intersection of open subsets of $X$ is open.
\item A space $X$ is called an \emph{almost $P$-space} if every non-empty $G_\delta$ subset of $X$ has non-empty interior.
\item A space $X$ is called a \emph{weak $P$-space} if every countable subset of $X$ is closed (and discrete).
\end{enumerate}
\end{definition}

Every $P$-space is an almost $P$-space and a weak $P$-space. Examples of almost $P$-spaces which are not $P$-spaces abound. Walter Rudin \cite{R} proved that the remainder of the \v Cech-Stone compactification of the natural numbers, $\omega^*$, is an almost $P$-space. Since infinite weak $P$-spaces cannot be compact, $\omega^*$ provides an example of an almost $P$-space which is not a weak $P$-space. Steve Watson managed to construct in \cite{W} even a compact almost $P$-space where every point is the limit of a non-trivial convergent sequence.


\begin{definition} \label{defher}
Let $\mathcal{P}$ be a property of subsets of a topological space $X$. We say that $X$ is \emph{$\mathcal{P}$-hereditarily Volterra (Baire)} if every subspace of $X$ satisfying $\mathcal{P}$ is Volterra (Baire). A space is \emph{hereditarily Volterra (Baire)} if each one of its subspaces is Volterra (Baire).
\end{definition}

Contrast our Definition $\ref{defher}$ with the common habit of calling a space \emph{hereditarily Baire} if each of its closed subsets is Baire. For example, the real line is not hereditarily Baire according to our definition.

Since every subspace of a $P$-space is a $P$-space, the following proposition is clear.

\begin{proposition}
Every $P$-space is hereditarily Volterra.
\end{proposition}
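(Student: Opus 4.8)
The plan is to reduce the statement to two observations already available in the excerpt. First, the Proposition of Gauld--Piotrowski tells us that a space is Volterra if and only if every pair of dense $G_\delta$ subsets has dense intersection, so I only need to establish the Volterra property via this $G_\delta$-characterization rather than working directly with pointwise discontinuous functions. Second, the remark immediately preceding the statement reminds us that every subspace of a $P$-space is again a $P$-space; combined with the fact that every Baire space is Volterra (also noted in the excerpt), the whole argument collapses to showing that \emph{a single $P$-space is Volterra}, since then each subspace, being a $P$-space, inherits the property and the ``hereditarily'' part follows automatically.

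So the heart of the matter is: if $X$ is a $P$-space, then $X$ is Volterra. First I would take two dense $G_\delta$ subsets $G = \bigcap_{n \in \omega} U_n$ and $H = \bigcap_{n \in \omega} V_n$ of $X$, where each $U_n$ and $V_n$ is open. The key structural fact about a $P$-space is that a countable intersection of open sets is open, so $G$ and $H$ are themselves \emph{open} sets. Moreover a dense open set in any space is such that its intersection with any other dense set is dense; more to the point, $G$ and $H$ are dense \emph{open} sets, and in a $P$-space the intersection $G \cap H$ is again open (a finite, indeed countable, intersection of open sets). I would then argue that the intersection of two dense open sets is dense — this is elementary: if $W$ is any nonempty open set, density of $G$ gives $W \cap G$ nonempty and open, and density of $H$ applied to this open set gives $W \cap G \cap H$ nonempty, so $G \cap H$ meets every nonempty open set and is therefore dense. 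By the Gauld--Piotrowski characterization, $X$ is Volterra.

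The ``hereditarily'' upgrade is then immediate: let $Y \subseteq X$ be any subspace. Since the $P$-space property is hereditary (a countable intersection of sets open in $Y$ is the trace on $Y$ of a countable intersection of sets open in $X$, hence open in $X$ and so open in $Y$), the subspace $Y$ is itself a $P$-space, and by the paragraph above $Y$ is Volterra. As this holds for every subspace, $X$ is hereditarily Volterra.

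I do not expect any genuine obstacle here, since the proposition is flagged in the text as ``clear''; the only point requiring a moment's care is the verification that the $P$-space property passes to subspaces, and that a dense $G_\delta$ in a $P$-space is actually a dense open set. Everything else is the routine fact that two dense open sets have dense intersection. The cleanest presentation is therefore to prove Volterra-ness for an arbitrary $P$-space directly from the Gauld--Piotrowski criterion, and then invoke hereditariness of the $P$-space property in one line.
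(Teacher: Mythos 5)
Your proof is correct and is exactly the argument the paper has in mind: the paper simply remarks that every subspace of a $P$-space is a $P$-space and calls the proposition clear, the implicit content being your two observations that dense $G_\delta$ sets in a $P$-space are dense open and that two dense open sets meet in a dense set. Your write-up just makes explicit what the paper leaves to the reader.
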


\begin{proposition} \label{mainprop}
Every almost $P$-space is dense-hereditarily Volterra and open-hereditarily Volterra.
\end{proposition}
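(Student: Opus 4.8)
The plan is to work through the characterization of Volterra spaces in terms of dense $G_\delta$ sets (the Proposition attributed to Gauld and Piotrowski), so that I need only show: if $X$ is an almost $P$-space and $Y \subseteq X$ is either dense or open, then any two dense $G_\delta$ subsets of $Y$ have dense intersection in $Y$. Let me think about how to prove the claim.

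The central observation I want to exploit is that in an almost $P$-space every dense $G_\delta$ set is itself dense \emph{and} somewhat ``large''. Let me track what happens to a dense $G_\delta$ set $G = \bigcap_{n} U_n$ in $Y$, where each $U_n$ is open and dense in $Y$. I would first reduce to understanding dense $G_\delta$ sets in the ambient $X$: using denseness (resp. openness) of $Y$ in $X$, I want to lift open dense subsets of $Y$ to open subsets of $X$ whose trace on $Y$ recovers the $U_n$, thereby relating a dense $G_\delta$ in $Y$ to a $G_\delta$ in $X$. The key step is then the following claim about $X$ itself: in an almost $P$-space, the intersection of two dense $G_\delta$ sets is dense. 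Indeed, if $G$ and $H$ are dense $G_\delta$ subsets of $X$, write $G = \bigcap_n U_n$ and $H = \bigcap_n V_n$ with all $U_n, V_n$ open and dense. Then $G \cap H = \bigcap_n (U_n \cap V_n)$ is a $G_\delta$ set, and each $U_n \cap V_n$ is open and dense. So $G \cap H$ is itself a dense $G_\delta$ provided I can show it is dense — which is exactly the statement that $X$ is Volterra, so I must argue directly.

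Here is where the almost $P$-space hypothesis does the real work. To show $G \cap H = \bigcap_n W_n$ (with $W_n = U_n \cap V_n$ open dense) is dense, take any nonempty open $O \subseteq X$; I want $O \cap \bigcap_n W_n \neq \emptyset$. The set $O \cap \bigcap_n W_n$ is a $G_\delta$ subset of $O$; if it were nonempty it would, by the almost $P$-space property (which passes to open subspaces), have nonempty interior and we would be done. The obstacle is showing it is nonempty in the first place, i.e. that I cannot intersect away the whole of $O$. I expect to argue that $O \cap \bigcap_n W_n$ is a dense $G_\delta$ of the open subspace $O$: for this I would verify that an open subspace of an almost $P$-space is again an almost $P$-space (an easy check, since a $G_\delta$ in $O$ is a $G_\delta$ in $X$ and its $X$-interior meets $O$), and then invoke the fact that in an almost $P$-space a decreasing-or-arbitrary countable intersection of \emph{dense open} sets is dense. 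That last fact is the heart of the matter and follows because a countable intersection of dense open sets is a dense $G_\delta$, which in an almost $P$-space has nonempty interior inside every nonempty open set — so it meets every nonempty open set, hence is dense.

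The main obstacle I anticipate is the passage between $Y$ and $X$ for the two cases, and making the ``dense $G_\delta$ in $Y$'' hypothesis interact correctly with the almost $P$-property of $X$. For the \textbf{open} case this is clean: an open subspace of an almost $P$-space is an almost $P$-space, so it suffices to prove that every almost $P$-space is Volterra, which is the argument above. For the \textbf{dense} case the subtlety is that a dense subspace of an almost $P$-space need not be an almost $P$-space, so I cannot simply reduce to the open case; instead I would take open dense subsets $U_n = X_n \cap Y$ of $Y$ (with $X_n$ open in $X$), note that each $X_n \cup (X \setminus \overline{Y})$ is open and dense in $X$ by density of $Y$, and transfer the problem to dense $G_\delta$ sets of the ambient almost $P$-space $X$, where the previous argument applies; pulling the resulting dense intersection back into $Y$ and checking that density is preserved under the trace on a dense set is the final routine-but-careful step. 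I would record separately, as the reusable core lemma, the statement that every almost $P$-space is Volterra, and then obtain both heredity statements from it together with the two subspace transfer arguments.
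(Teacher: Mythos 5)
Your reduction collapses at its central step, which is both circular and, as stated, false. You reduce ``almost $P$ implies Volterra'' to the claim that in an almost $P$-space every countable intersection of dense open sets is dense --- that is, to ``almost $P$ implies Baire''. This is exactly the reduction that cannot work: the Volterra property is strictly weaker than the Baire property, and almost $P$-spaces (indeed even $P$-spaces) need not be Baire. The paper's own Example~\ref{weakexample} refutes your claimed ``fact'': the space $X=\{f\in 2^{\omega_1}:|f^{-1}(1)|<\omega\}$ with the countably supported topology is a $P$-space (hence almost $P$), while the sets $U_n=X\setminus\{f:|f^{-1}(1)|\leq n\}$ are dense open with $\bigcap_{n<\omega}U_n=\emptyset$. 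Your justification is also circular: you argue the intersection is dense ``because a countable intersection of dense open sets is a dense $G_\delta$'' --- but its density is precisely what is to be proved, and the almost $P$ property says nothing when the intersection might be empty. What your approach discards is the one hypothesis that makes the Volterra property provable: the two $G_\delta$ sets $G$ and $H$ are \emph{given} to be dense, and the proof must exploit that. The paper's argument does: in an almost $P$-space a dense $G_\delta$ set $G$ must have dense interior, since otherwise $G\cap(X\setminus\overline{Int(G)})$ would be a non-empty $G_\delta$ with empty interior (any open subset of it would lie in $Int(G)$ yet miss $\overline{Int(G)}$), contradicting almost $P$-ness. Granting this, $Int(G)\cap H$ is dense because an open set meets a dense set in a set dense in the closure of that open set, so $\overline{G\cap H}\supseteq\overline{Int(G)\cap H}=\overline{Int(G)}=X$. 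At no point is a countable intersection of dense open sets shown to be dense ``from scratch''.

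The subspace transfers also contain errors, though more repairable ones. Your assertion that a dense subspace of an almost $P$-space need not be almost $P$ is backwards: it always is, and this is exactly Levy's result \cite{Le2} that the paper invokes for both cases (direct check: if $G=Y\cap\bigcap_n U_n$ is a non-empty $G_\delta$ of a dense subspace $Y$, then $\bigcap_n U_n$ is a non-empty $G_\delta$ of $X$, its $X$-interior $W$ is non-empty, and $\emptyset\neq W\cap Y\subseteq G$ is open in $Y$). So the dense case reduces to the core claim exactly as the open case does, and your alternative transfer is unnecessary --- fortunately, because its final ``routine-but-careful'' step fails in general: the trace of a dense $G_\delta$ of $X$ on a dense subspace $Y$ need not be dense in $Y$, or even non-empty (the irrationals form a dense $G_\delta$ of $\mathbb{R}$ disjoint from $\mathbb{Q}$). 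Repairing that step again requires knowing that dense $G_\delta$ sets have dense interior in almost $P$-spaces, i.e., the key idea your proof is missing.
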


\begin{proof}
Let $X$ be an almost $P$-space. We claim that $X$ is Volterra. Indeed, let $G$ and $H$ be dense $G_\delta$ subspaces of $X$. We claim that $Int(G) \cap H$ is a dense set. Since $H$ is dense and $Int(G)$ is open we have that $\overline{Int(G) \cap H}=\overline{Int(G)}$. So if $Int(G) \cap H$ were not dense then $X \setminus \overline{Int(G)}$ would be a non-empty open set, and thus it would have to meet $G$. Therefore, $G \cap (X \setminus \overline{Int(G)})$ would be a non-empty $G_\delta$ set with empty interior. But that contradicts the fact that $X$ is an almost $P$-space.

To prove the statement of the proposition it now suffices to recall a result of R. Levy \cite{Le2} stating that every open set and every dense set of an almost $P$-space is an almost $P$-space.
\end{proof}

Almost $P$-spaces need not be hereditarily Volterra.

\begin{example}
There is a Baire regular almost $P$-space with a closed non-weakly Volterra subspace.
\end{example}

\begin{proof}
R. Levy \cite{Le1} constructed a Baire regular almost $P$-space containing a closed copy of the rational numbers, and the rational numbers are not weakly Volterra.
\end{proof}

On the other hand, weak $P$-spaces need not even be weakly Volterra. The construction of our counterexample will exploit the density topology on the real line. We recall its definition.

\begin{definition}
A measurable set $A \subset \mathbb{R}$ has density $d$ at $x$ if the limit:
$$\lim_{h \to 0} \frac{m(A \cap [x-h, x+h])}{2h}$$

exists and is equal to $d$. We denote by $d(x,A)$ the density of $A$ at $x$ and let $\phi(A)=\{x \in \mathbb{R}: d(x,A)=1 \}$.
\end{definition}

\begin{definition}
The family of all measurable sets $A \subset \mathbb{R}$ such that $\phi(A) \supset A$ defines a topology on $\mathbb{R}$ called the \emph{density topology} and denoted by $\mathbb{R}_d$.
\end{definition}

Since the density topology is finer than the Euclidean topology on the real line, every point is a $G_\delta$ set in $\mathbb{R}_d$. Moreover, every measure zero set is easily seen to be closed in $\mathbb{R}_d$. In particular, the density topology is a weak $P$-space (see \cite{T} for a comprehensive study of the density topology).

Recall that a space is \emph{resolvable} if it contains two disjoint dense sets. Dontchev, Ganster and Rose \cite{DGR} proved that the density topology is resolvable (this was later improved by Luukkainen \cite{L} who proved that $\mathbb{R}_d$ even contains a pairwise disjoint family of dense sets of size continuum). In the following lemma we review all properties of the density topology that are relevant to us here.

\begin{lemma} \label{denslemma}
$\mathbb{R}_d$ is a Tychonoff resolvable weak $P$-space with points $G_\delta$.
\end{lemma}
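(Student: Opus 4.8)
The plan is to verify the four asserted properties one at a time, since the statement is essentially a compilation of facts, most of which have already been recorded above or appear in the cited literature.

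First I would dispose of the two properties that are immediate from the preceding discussion. That points are $G_\delta$ follows from the observation that $\mathbb{R}_d$ refines the Euclidean topology: each singleton is a $G_\delta$ set in $\mathbb{R}$, and a countable intersection of Euclidean-open sets is \emph{a fortiori} a countable intersection of $\mathbb{R}_d$-open sets. For the weak $P$-space property, I would use that every countable set $C \subset \mathbb{R}$ has Lebesgue measure zero and that measure-zero sets are $\mathbb{R}_d$-closed, as noted above. Since every subset of $C$ is again countable, hence of measure zero, hence closed, the subspace $C$ carries the discrete topology; thus every countable set is closed and discrete, which is precisely the weak $P$-space property.

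Next I would handle resolvability simply by invoking the theorem of Dontchev, Ganster and Rose \cite{DGR} (or the stronger result of Luukkainen \cite{L}), which directly furnishes two disjoint dense subsets of $\mathbb{R}_d$.

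The one genuinely substantial point is that $\mathbb{R}_d$ is Tychonoff. Hausdorffness is free, again because the topology refines the Hausdorff Euclidean topology. Complete regularity, however, is not obvious: it requires producing, for each point and each $\mathbb{R}_d$-closed set not containing it, a continuous real-valued separating function, and the construction of such functions rests on the Lebesgue density theorem. Rather than reproduce this, I would cite the classical development of the density topology, for which the standard reference is Tall \cite{T}, where complete regularity is established. This is the step I expect to be the main obstacle were one to prove the lemma from first principles; the remaining items are routine once the basic measure-theoretic facts recalled before the lemma are in hand.
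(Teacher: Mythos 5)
Your proposal is correct and follows essentially the same route as the paper, which likewise treats the lemma as a compilation of previously recalled facts: points are $G_\delta$ because $\mathbb{R}_d$ refines the Euclidean topology, the weak $P$-space property comes from measure-zero sets being $\mathbb{R}_d$-closed, resolvability is the theorem of Dontchev, Ganster and Rose \cite{DGR}, and the Tychonoff property is delegated to the literature on the density topology \cite{T}. Your additional observation that subsets of countable sets are again closed, yielding discreteness, is a correct and slightly more careful reading of the definition than the paper makes explicit.
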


We also need the following lemma of Gruenhage and Lutzer.

\begin{lemma} \label{gruenlemma}
\cite{GL} Suppose $\mathcal{U}$ is a point-finite collection of open subsets of a space $X$ and that each $U \in \mathcal{U}$ contains a $G_\delta$ set $G(U)$. Then $\bigcup \{G(U): U \in \mathcal{U} \}$ is a $G_\delta$ set.
\end{lemma}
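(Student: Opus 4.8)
The plan is to build a single $G_\delta$ representation of the union directly out of the representations of the individual pieces, using point-finiteness to guarantee that any point lying in the candidate $G_\delta$ set is in fact caught by one fixed member of $\mathcal{U}$. First I would fix, for each $U \in \mathcal{U}$, open sets $O_n(U)$ with $G(U) = \bigcap_{n \in \omega} O_n(U)$, and then pass to the decreasing sequence $V_n(U) := U \cap O_0(U) \cap \cdots \cap O_n(U)$. Each $V_n(U)$ is open, we have $V_{n+1}(U) \subseteq V_n(U) \subseteq U$, and since $G(U) \subseteq U$ we still have $\bigcap_{n} V_n(U) = G(U)$. The point of this normalization is to secure both monotonicity and the containment $V_n(U) \subseteq U$, which are the two facts the argument will lean on.

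Next I would set $W_n := \bigcup_{U \in \mathcal{U}} V_n(U)$, which is open as a union of open sets, and claim the identity
\[
\bigcup_{U \in \mathcal{U}} G(U) = \bigcap_{n \in \omega} W_n,
\]
which exhibits the left-hand side as a $G_\delta$ set and finishes the proof. The inclusion $\subseteq$ is immediate: if $x \in G(U_0)$ then $x \in V_n(U_0) \subseteq W_n$ for every $n$, so $x \in \bigcap_n W_n$.

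The reverse inclusion is the crux, and it is exactly where point-finiteness does the work. Suppose $x \in \bigcap_n W_n$; then for each $n$ there is some $U \in \mathcal{U}$ with $x \in V_n(U)$, and since $V_n(U) \subseteq U$ every such witness lies in the \emph{finite} set $\{U \in \mathcal{U} : x \in U\}$. By the pigeonhole principle a single $U^\ast$ serves as a witness for infinitely many $n$, and because the sequence $(V_n(U^\ast))_n$ is decreasing, membership at infinitely many indices forces membership at all of them, so $x \in \bigcap_n V_n(U^\ast) = G(U^\ast)$. The step I expect to be the main obstacle is precisely recognizing that point-finiteness cannot be dispensed with: absent it, a point could be witnessed by a different $U$ at each level $n$ and thus lie in every $W_n$ while belonging to no single $G(U)$, and the finiteness-plus-monotonicity combination above is what excludes this migration.
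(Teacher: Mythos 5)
Your proof is correct and complete: the normalization to decreasing open sets $V_n(U)\subseteq U$, the levelwise unions $W_n$, and the point-finiteness-plus-pigeonhole argument for the reverse inclusion all hold up, and point-finiteness is used exactly where it must be. The paper itself offers no proof of this lemma (it simply cites \cite{GL}), and your argument is essentially the one given in that reference, so there is nothing to add.
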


\begin{example} \label{weakexample}
There is a non-weakly Volterra Tychonoff weak $P$-space.
\end{example}

\begin{proof}
Let $X=\{f \in 2^{\omega_1}: |f^{-1}(1)|<\omega\}$ with the topology inherited from the countably supported product topology on $2^{\omega_1}$. Let $U_n=X \setminus \{f \in 2^{\omega_1}: |f^{-1}(1)| \leq n \}$, and note that $U_n$ is an open dense set in $X$. 

Use Lemma $\ref{denslemma}$ to fix disjoint dense sets $D_1$ and $D_2$ inside $\mathbb{R}_d$.

Since $\mathbb{R}_d$ is a weak $P$-space and $X$ is even a $P$-space, $X \times \mathbb{R}_d$ is a weak $P$-space. Note that the family $\{U_n \times \mathbb{R}_d: n <\omega \}$ is point-finite and $U_n \times \{x\}$ is a $G_\delta$ set contained in $U_n \times \mathbb{R}_d$ for every $x \in \mathbb{R}_d$. Thus, by Lemma $\ref{gruenlemma}$, $\bigcup_{x \in D_1} U_n \times \{x\}$ and $\bigcup_{x \in D_2} U_n \times \{x\}$ are disjoint dense $G_\delta$ sets in $X \times \mathbb{R}_d$. 
\end{proof}

Since every subspace of $\mathbb{R}_d$ is Baire (see \cite{T}), Example $\ref{weakexample}$ shows that the product of a hereditarily Volterra space and a hereditarily Baire space may fail to be weakly Volterra. This suggests the following question:

\begin{question} \label{prodquest}
Are there hereditarily Baire spaces $X$ and $Y$ such that $X \times Y$ is not weakly Volterra?
\end{question}

Note that there are metric Baire spaces whose square is not weakly Volterra (see \cite{GGP}, Example 3.9), but if an example answering $\ref{prodquest}$ in the positive exists, none of its factors can be metric. Indeed, the product of a Baire space and a closed-hereditary Baire metric space is Baire (see \cite{M}).

\section{A new weakening of $P$-spaces}

\begin{definition}
We call a space $X$ a \emph{pseudo $P$-space} if it is both an almost $P$-space and a weak $P$-space.
\end{definition}

\begin{example}
There are regular pseudo $P$-space which are not $P$-spaces.
\end{example}


\begin{proof}

For one example, let $X$ be the subspace of all weak $P$-points of $\omega^*$. In \cite{K}, Kunen proved that $X$ is a dense subset of $\omega^*$ and hence it is an almost $P$-space. Clearly $X$ is a weak $P$-space. Since there is a weak $P$-point which is not a $P$-point in $\omega^*$, $X$ is not a $P$-space though.

Another example was essentially presented in \cite{HW}. Let $X$ be a Lindel\"of $P$-space without isolated points. Van Mill (see Lemma 3.1 of \cite{HW}) proved that there is a point $p \in \beta X \setminus X$ such that $p$ is not in the closure of any countable subset of $X$. Then $X \cup \{p\}$ is a weak $P$-space. But, from the fact that $X$ is a $P$-space it follows that $X \cup \{p\}$ is an almost $P$-space. Now, $X \cup \{p\}$ is not a $P$-space, or otherwise it would be a Lindel\"of $P$-space, and thus each of its Lindel\"of subspaces should be closed. But $X$ is a non-closed Lindel\"of subspace of $X \cup \{p\}$.

\end{proof}

Pseudo $P$-spaces are in some sense very close to $P$-spaces, closer than almost $P$-spaces, so that suggests the following question. 

\begin{question}
Is there a regular pseudo $P$-space which is not hereditarily weakly Volterra?
\end{question}

The following example provides a partial answer to this question.

\begin{example} \label{pseudoexample}
There is a Hausdorff (non-regular) Baire pseudo $P$-space which is not hereditarily weakly Volterra.
\end{example}

\begin{proof}
Let $X=\{f \in 2^{\omega_1}: |f^{-1}(1)| \leq \aleph_0 \}$. Let $\mathcal{C}$ be the set of all countable partial functions from a countable subset of $\omega_1$ to $2$. For every $\sigma \in \mathcal{C}$ let $[\sigma] =\{f \in 2^{\omega_1}: \sigma \subset f \}$. Moreover, for every $n<\omega$ let $X_n=\{f \in 2^{\omega_1}: |f^{-1}(1)|=n \}$. Define a topology on $X$ by declaring $\{[\sigma] \setminus X_n: \sigma \in \mathcal{C}, n < \omega \}$ to be a subbase. 

\vspace{.1in}

\noindent {\bf Claim 1} $X$ is a pseudo $P$-space.

\begin{proof}[Proof of Claim 1]
The topology on $X$ is a refinement of the countably supported box product topology on $2^{\omega_1}$ and thus $X$ is a weak $P$-space. To prove that $X$ is an almost $P$-space, let $G=\bigcap \{U_n: n < \omega \}$ be a non-empty $G_\delta$ set and $x \in G$. For every $n<\omega$, choose $\alpha_n$ and a finite set $\mathcal{F}_n \subset \{X_k: k < \omega \}$ such that $V_n:=[x \upharpoonright \alpha_n]  \setminus \bigcup \mathcal{F}_n \subset U_n$. Let $h \in \bigcap_{n<\omega} V_n$ be a function with infinite support and $\beta<\omega_1$ be an ordinal such that $\beta \geq \sup_{n<\omega} \alpha_n$. Then $[h \upharpoonright \beta] \subset \bigcap_{n<\omega} V_n \subset \bigcap_{n<\omega} U_n$.
\renewcommand{\qedsymbol}{$\triangle$}
\end{proof}

\noindent {\bf Claim 2} The space $X$ is Baire.

\begin{proof}[Proof of Claim 2] 
We prove that every meager set is nowhere dense. Let $\{N_n: n < \omega \}$ be a countable family of nowhere dense subsets of $X$. Let $\sigma$ be a countable partial function with domain $\alpha < \omega_1$ and $k$ be an integer: we are going to prove that the basic open set $[\sigma] \setminus \bigcup \{X_n: n \leq k\}$ is not contained in the closure of $\bigcup_{n<\omega} N_n$. Since $N_0$ is nowhere dense there must be a countable partial function $\sigma_0$ extending $\sigma$ with domain $\alpha_0 > \alpha$ and  an integer $k_0 < \omega$ such that $([\sigma_0] \setminus \bigcup \{X_k: k \leq k_0\}) \cap N_0=\emptyset$.

Suppose we've found an increasing sequence of countable partial functions $\{\sigma_i: i < n \}$ and an increasing sequence of integers $\{k_i: i < n \}$. Since $N_n$ is nowhere dense there must be a countable partial function $\sigma_n$ extending $\sigma_{n-1}$ and an integer $k_n>k_{n-1}$ such that $[\sigma_n] \cap N_n=\emptyset$. Let $\sigma_\omega=\bigcup_{i<\omega} \sigma_i$. Then $([\sigma_\omega] \setminus \bigcup \{X_k: k < \omega \}) \cap \bigcup_{n<\omega} N_n=\emptyset$ and $\emptyset \neq [\sigma_\omega] \subset ([\sigma] \setminus \bigcup \{X_n: n \leq k \})$. Thus $[\sigma] \setminus \bigcup \{X_n: n \leq k \}$ is not contained in $\overline{\bigcup_{n<\omega} N_n}$ and since the choice of $\sigma$ and $k$ was arbitrary, this shows that $\bigcup_{n<\omega} N_n$ is nowhere dense.
\renewcommand{\qedsymbol}{$\triangle$}
\end{proof}

\noindent {\bf Claim 3} Let $Y=\bigcup_{n<\omega} X_n \subset X$. Then $Y$ is not weakly Volterra.

\begin{proof}[Proof of Claim 3]
Let $G=\bigcap \{X \setminus X_k: k$ is even $\}$ and $H=\bigcap \{X \setminus X_k: k$ is odd $\}$. Then $G$ and $H$ are dense $G_\delta$ subsets of $Y$ with empty intersection.
\renewcommand{\qedsymbol}{$\triangle$}
\end{proof}
\end{proof}

As pointed out by Gary Gruenhage, Example $\ref{pseudoexample}$ is not regular. For example, the closed set $X_1$ and the null function cannot be separated by disjoint open sets.


\end{document}